\title[Hereditary Recurrent Configurations and Spanning Trees]{A Bijection Between the 
Recurrent Configurations of a
         Hereditary Chip-Firing Model 
         and Spanning Trees}
\author{Spencer Backman}
\newtheorem{theorem}{Theorem}
\newtheorem{lemma}{Lemma}
\begin{document}

\maketitle

\begin{abstract}
Hereditary chip-firing models generalize the Abelian sandpile model and the cluster firing model to an exponential family of games induced by covers of the vertex set.  This generalization retains some desirable properties, e.g. stabilization is independent of firings chosen and each chip-firing equivalence class contains a unique recurrent configuration.  In this paper we present an explicit bijection between the recurrent configurations of a hereditary chip-firing model on a graph and its spanning trees.

 \

\

\noindent Keywords:  Abelian sandpile model, chip-firing,  Cori-Le Borgne algorithm,  Dhar's burning algorithm,  $G$-parking functions,  recurrent configurations, spanning trees.

\end{abstract}

\onehalfspacing

\

\begin{section}{Introduction}

Chip-firing on graphs has been studied by several different communities over the past 25 years.  In statistical physics it was introduced by Dhar \cite{Dhar} as an example of self organized criticality as proposed in the Bak-Tang-Wiesenfeld model \cite{Bak}.  In graph theory it was investigated by Tardos \cite{Tardos} and  Bj\"orner, Lovasz and Shur \cite{Bjorner} extending ideas introduced by Spencer \cite{Spencer}.  Later Biggs   \cite{Biggs} studied this game as related to algebraic potential theory on graphs.  Most recently, Baker and Norine \cite{Baker} have shown that using the language of chip-firing, one can derive a Riemann-Roch theorem for graphs analogous to the classical statement from algebraic geometry. 

In the Abelian sandpile model, vertices are restricted to fire individually.  This is in contrast to the cluster firing model where vertices are allowed to fire simultaneously.  A chip-firing model is a collection $H$ of subsets of the vertex set, those subsets which are allowed to fire simultaneously if no vertex is sent into debt.  If every vertex appears somewhere in $H$, and the family $H$ has the hereditary property, we say that $H$ is a {\it hereditary chip-firing model}.  From this perspective, the sandpile model is the coarsest hereditary chip-firing model, described by taking $H$ to be the collection of all singleton sets from $V(G)\setminus \{ v_0\}$, and the cluster model is the finest hereditary chip-firing model, described by taking $H$ to be the power set of $V(G)\setminus \{ v_0\}$. 

Some of the fundamental properties of the Abelian sandpile model and the cluster firing model extend to arbitrary hereditary chip-firing models: the stabilization of a configuration is independent of the firings chosen and each chip-firing equivalence class contains a unique recurrent configuration.  It is well known that the number of chip-firing equivalence classes is the same as the number of spanning trees of a graph.  It follows that the number of recurrent configurations in a hereditary chip-firing models is the same as the number of spanning trees.  

For the case of ASM and CFM, several bijections between recurrent configurations and spanning trees exist in the literature, e.g. \cite{Dhar} \cite{Cori} \cite{Biggs2} \cite{Chebkin}.  There is a simple relationship between the recurrent configurations in ASM and CFM which allows a bijection is one model to be ``dualized" to produce a bijection in the other model.  The recurrent configurations in CFM go by several names: $G$-parking functions, $v_0$-reduced divisors, superstable configurations.   It is the aim of this paper to present an explicit bijection between the recurrent configurations in an arbitrary hereditary chip-firing model and the spanning trees of a graph.  Our bijection is a modification of the Cori-Le Borgne algorithm \cite{Cori}.

 If we order the elements of $H$ by inclusion, we have a set of maximal elements $A_1, \dots A_k$ which in turn, by the hereditary property, determine $H$.  We note that these maximal elements of $H$ need not be disjoint, i.e. hereditary chip-firing models are not determined by partitions of the vertex set, instead we should think of them as being described by covers of $V(G)\setminus \{v_0\}$.  Moreover, these covers need not be irredundant, instead we ask that the elements of the cover be incomparable.  This allows us to naturally identify hereditary chip-firing models with maximal antichains in the Boolean lattice.  Calculating the number of such maximal antichains is a challenging problem \cite{Duffus}, but this quantity is easily seen to be at least exponential in $n$.


\end{section}

\

\begin{section}{Notation and Terminology}

We take $G$ to be a connected undirected loop less multigraph with vertices labeled $v_0, v_1, \dots v_n$.  Given $X,Y \subset V(G)$, we let $(X,Y) = \{ e \in E(G) : e = (x,y), x\in X, y \in Y\}$, and let $X^c$ denote $V(G) \setminus X$. To describe chip-firing, we begin with a graph $G$ and a configuration $D$ of {\it chips} on $G$.   Formally, a  configuration of chips is a function $D : V(G) \rightarrow \mathbb{Z}^{n+1}$.  For the purposes of this paper we will usually restrict our attention to $D$ such that $D(v_i) \geq 0$ for all $i \neq 0$ and $D(v_0) = -\sum_{i=1}^n D(v_i)$ so that the sum of the values of $D$, called the {\it the degree of $D$}, is 0.  If a vertex $v$ in a configuration of $D$ is seen to have $D(v) <0$, we say that this vertex is {\it in debt}.  The basic operation is {\it firing} whereby a vertex $v$ sends a chip along each of its edges to its neighbors and loses deg$(v)$ chips in the process so that the total number of chips is conserved.  We designate $v_0$ to be the {\it sink vertex} and say that it cannot fire.  This ensures that we cannot continue firing vertices indefinitely.
 The adjacency matrix $A$ of a graph is a $n+1 \times n+1$ matrix with entries $A_{i,j} = \# $ of edges between $v_i$ and $v_j$.  Taking $D$ to be the diagonal matrix with $D_{i,i} = $ degree of $v_i$, the Laplacian of a graph is defined as the difference $D-A$.
 
    For $S \subset V(G)$, we take $\chi_S$ to be the characteristic vector of $S$.  As an abuse of notation we denote $\chi_{\{v_i\}}$ by $\chi_i$.    From a linear algebraic perspective, viewing a configuration $D$ as a vector, if a vertex $v_i$ fires then $D$ is replaced by $D- Q\chi_i$, and more generally if a set $S$ fires we obtain $D-Q\chi_{S}$.  We say that two configurations $D$ and $D'$ are equivalent if there exists some sequence of firings which brings $D$ to $D'$ (possibly including firings by $v_0$ and passing through intermediate  configurations which are negative at vertices other than $v_0$).  Two configurations are seen to be equivalent if their difference is in the integral span of the columns of the Laplacian.  We call a collection of configurations which are equivalent, a chip-firing equivalence class.  

  The ASM (Abelian sandpile model) is defined by placing the additional restriction that vertices may only fire one at a time, whereas in the CFM (cluster firing model), vertices are allowed to fire simultaneously.  We fix a collection $H$ of subsets of $V(G)\setminus \{v_0\}$, those sets which are allowed to fire simultaneously if no vertex is sent into debt,  and call this collection a {\it chip-firing model}.  If each vertex $v_i$ with $i\neq 0$ appears somewhere in $H$, we say that $H$ covers $G$.  If $H$ covers $G$ and $H$ is hereditary, i.e. for every $A \in H$ and $B \subset A$, we have that $B \in H$,  we say that $H$ is a {\it hereditary chip-firing model}. 
  
   Let $H$ be a hereditary chip-firing model on a graph $G$. If a configuration of chips $D$ has no set of vertices $M \in H$ which can fire without some $v \in M$ being sent into debt, we say that $D$ is {\it stable}.  The process of firing sets from $H$ until a configuration becomes stable is called {\it stabilization}.  We say that a set $M\in H$, is {\it ready in $D$}  if this set can fire without sending any vertex into debt, and call a vertex $v$ {\it active} in a configuration $D$ if there exists some $M \subset V(G)\setminus \{v_0\}$ with $v \in M$ which is ready.  Suppose $v \in V(G)$ is active in a configuration $D$.  There may very well be several different maximal ready sets which contain $v$, and these different maximal ready sets might cause $v$ to lose different numbers of chips if they were to fire.  Therefore, we let $m(v,D)$ denote the minimum amount that an active vertex $v$ can lose by firing a maximal ready set in $D$ which contains $v$.
   
   Lemma 1 states that the stabilization of a configuration in a hereditary chip-firing model is well defined, so we denote the stable configuration obtained from $D$ by stabilization as $D^{\circ}$.  A configuration $D$ is said to be {\it reachable} from another configuration $D'$ if there exists a way of adding chips to $D'$ and then firing ready sets to reach $D$.  Because of our convention that the degree of $D$ be zero, we are actually adding configurations of the form $\chi_i-\chi_0$, i.e. subtracting from $v_0$ exactly as many chips as we add to other vertices.  This operation is referred to in \cite{Baker} as the {\it Abel-Jacobi map}.  A configuration $D$ is {\it globally reachable} if it is reachable from every other configuration.  Finally, we call $D$ {\it recurrent}, if it is both stable and globally reachable.  The original motivation for this terminology comes from the observation that if we continue adding chips and stabilizing, the configurations we will see infinitely many times are the recurrent ones.    The recurrent configurations in CFM ($G$-parking functions) are precisely the stable configurations, so there is no need for a discussion of global reachability.   We say that a configuration $\nu$ is {\it critical} if it is stable and $(D-Q\chi_o)^{\circ}=D$.  As with the ASM, a configuration is recurrent if and only if it is critical.  This statement is trivially true for the CFM.  \end{section}

\

\begin{section}{Preliminary Results}
In this section we present the basic results of hereditary chip-firing models.  Hereditary chip-firing models as well as the results of this section were discovered independently of the author by Paoletti \cite{Paoletti1} \cite{Paoletti2}, and Caracciolo, Paoletti and Sportiello\cite{Caracciolo}.  They observe that stabilization in a chip-firing model $H$ is independent of firings if and only if  $H$ is closed under subtraction, i.e. for all $A, B \in H$, we have $A\setminus B \in H$.  They then restrict to the case where for each $v  \in V(G)$, $\{ v\} \in H$.   It is easy to see that a family of subsets of $[n]$ is closed under subtraction and contains all singletons if and only if it is hereditary and covers $[n]$.

\begin{lemma}
  Given a fixed hereditary chip-firing model $H$ on a graph $G$, and a chip-firing configuration $D$ on $G$, the stabilization of $D$ is independent of the firings chosen.  
  
\end{lemma}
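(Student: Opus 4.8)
The plan is to view stabilization as an abstract rewriting system on configurations, where a single step replaces $D$ by $D-Q\chi_M$ for some nonempty ready set $M\in H$, and where the \emph{stable} configurations are exactly the normal forms (the configurations admitting no such step). The assertion of the lemma is then precisely that this system has a unique normal form reachable from each $D$, which by Newman's Lemma follows from two facts: (i) the system \emph{terminates}, i.e.\ there is no infinite legal firing sequence; and (ii) the system is \emph{locally confluent}, i.e.\ whenever two different ready sets can be fired from $D$, the two resulting configurations have a common descendant under the rewriting. For (i), this is exactly the phenomenon noted in Section~2: because $v_0$ may never fire and $G$ is connected, a standard argument (cf.\ \cite{Bjorner}) bounds the length of any legal firing sequence in terms of $D$ and $G$, so no infinite sequence exists; I would treat this as known.

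The content is (ii), and it is here that the hereditary hypothesis enters. Suppose $M,N\in H$ are both ready in $D$. The naive idea of firing $M$ and then $N$ does not work, since a vertex of $M\cap N$ can lose chips when $M$ fires and afterwards be unable to afford its share of firing $N$. Instead I would fire $M$ and then $N\setminus M$. Since $N\setminus M\subseteq N\in H$, the hereditary property gives $N\setminus M\in H$. To see that $N\setminus M$ is ready in $D':=D-Q\chi_M$, note that a vertex $v\in N\setminus M$ only gains chips when $M$ fires, so $D'(v)=D(v)+|(\{v\},M)|$, whereas firing $N\setminus M$ costs $v$ exactly $|(\{v\},(N\setminus M)^c)|=|(\{v\},N^c)|+|(\{v\},M\cap N)|$; the inequality $D(v)\ge |(\{v\},N^c)|$ (readiness of $N$) together with $|(\{v\},M)|\ge |(\{v\},M\cap N)|$ keeps $v$ out of debt. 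By the symmetric argument $M\setminus N\in H$ is ready in $D'':=D-Q\chi_N$. Using $\chi_M+\chi_{N\setminus M}=\chi_{M\cup N}=\chi_N+\chi_{M\setminus N}$, firing the second set in each branch gives $D'-Q\chi_{N\setminus M}=D-Q\chi_{M\cup N}$ and $D''-Q\chi_{M\setminus N}=D-Q\chi_{M\cup N}$, so both one-step successors of $D$ reach the common configuration $D-Q\chi_{M\cup N}$. The point is that this works even though $M\cup N$ itself need not lie in $H$: all that is needed is that the \emph{differences} $N\setminus M$ and $M\setminus N$ lie in $H$, which is exactly the hereditary hypothesis recast as closure under subtraction.

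Granting (i) and (ii), Newman's Lemma gives global confluence, and a terminating confluent system has a unique normal form reachable from each configuration; hence the stable configuration obtained from $D$ by stabilization does not depend on the firings chosen, and the notation $D^\circ$ is justified. As a byproduct the full firing vector $f$ is independent of the choices as well, since it is determined by $D-D^\circ=Qf$ together with the invertibility of the Laplacian on the coordinates indexed by $V(G)\setminus\{v_0\}$. I expect the only genuine obstacle to be (ii), and within it the single observation that one should complete an already-fired set $M$ to $M\cup N$ by firing the difference $N\setminus M$ rather than $N$; termination and the Newman's-Lemma bookkeeping are routine.
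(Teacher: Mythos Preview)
Your argument is correct. The key observation---that if $N$ is ready in $D$ and $M$ fires, then $N\setminus M\in H$ is ready in $D-Q\chi_M$---is exactly the same one the paper isolates, and your verification of it is clean. Where you diverge is in the packaging: the paper does not invoke Newman's Lemma but instead compares two complete stabilization sequences $M_1,\dots,M_s$ and $N_1,\dots,N_t$ directly. It takes the largest prefix of the $M$-sequence whose cumulative firing vector is dominated by that of the $N$-sequence, and then uses a \emph{multiset} version of the key observation (if $M$ is ready and a multiset $P$ fires, then $M\setminus P$ is ready) to show that the next set $M_{l+1}$ leaves a nonempty ready subset after the entire $N$-stabilization, contradicting stability of the $N$-endpoint. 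Your approach trades this global exchange argument for local confluence plus termination, which is more modular and only needs the set (not multiset) version of the key observation; it also makes the independence of the firing vector an immediate byproduct. The paper's approach is self-contained and avoids importing the abstract rewriting machinery, at the cost of needing the slightly stronger multiset statement. Both rely on termination (the paper tacitly, via the finiteness of $s$ and $t$), so your appeal to the standard boundedness argument is on the same footing.
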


\begin{proof}  First, we observe that if $M,N \subset V(G)\setminus \{v_0\}$, $M$ is ready and $N$ fires first, then $M\setminus N$ is ready.  This is because if we fire $N$ and then fire $M\setminus N$, a vertex $v \in M$ loses at most as many chips as if $M$ had fired alone.  More generally, if $M$ is ready and a multi set $N$ fires, i.e. we fire vertices in $N$ a number of times corresponding their multiplicity in N, then $M\setminus N$ is ready.  Let $M_1, \dots, M_s \in H$ and $N_1, \dots, N_t \in H$ correspond to sequences of sets which are fired in two different stabilizations of $D$.  Let $X_{M _q} = \sum_{i=1}^q \chi_{M_i}$ and $X_{B_r} = \sum_{i=1}^r \chi_{B_r} $.  Suppose that $D-QX_{M _s}$ and $D-QX_{N _t}$ are not equal, i.e. the two stabilizations of $D$ are different.  We note that this can occur if and only if $X_{M _s} \neq X_{N _t}$, as $v_0$ does not fire and the kernel of the Laplacian is generated by the all one's vector.  It follows that, without loss of generality, there exists some $l$ maximum such that $X_{M_l}\leq X_{N_t}$ and $X_{M_{l+1}} \nleq X_{N_t}$.  By construction $M_{l+1}$ is ready for  $D- QX_{M_l}$.  Now let $\chi_P = X_{N_{t}}-X_{M_l}$ be the characteristic vector corresponding to the multi set $P$ .  By the first observation, $M_{l+1} \setminus P$ is nonempty and ready for $D- QX_{M_l}-Q\chi_P = D-QX_{N_t} $, but this contradicts the fact that $D-QX_{N_t} $ is stable.

\end{proof}

\begin{theorem}
Given a fixed hereditary chip-firing model $H$ on a graph $G$, there exists a unique recurrent configuration $\nu$ in each chip-firing equivalence class.
\end{theorem}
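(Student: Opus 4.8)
The plan is to prove existence and uniqueness separately, using Lemma~1 and the characterization (which, as recorded in Section~2, holds exactly as in the ASM) that a stable configuration is recurrent precisely when it is critical, i.e. when $(\nu-Q\chi_0)^{\circ}=\nu$. Write $\beta=-Q\chi_0$, so that for $i\neq 0$ the value $\beta(v_i)$ is the number of edges between $v_i$ and $v_0$; in particular $\beta\ge 0$ off the sink and $\beta$ lies in the Laplacian lattice $L$. Let $\Phi(E)=(E+\beta)^{\circ}=(E-Q\chi_0)^{\circ}$ be the operation ``fire the sink once, then stabilize''. Since $E-Q\chi_0$ and $E$ differ by a column of $Q$ and stabilization only fires sets, $\Phi$ maps each equivalence class to itself, and the critical configurations of a class are exactly the fixed points of $\Phi$ there; Lemma~1 (via the monotonicity of readiness already exploited in its proof, which lets one fire an earlier stabilization as a legal prefix of a later one) gives $\Phi^{k}(E)=(E+k\beta)^{\circ}$ for all $k$. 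Now fix a class $\mathcal C$ and a stable $D\in\mathcal C$: the orbit $D,\Phi(D),\Phi^{2}(D),\dots$ lies in the finite set of stable configurations of $\mathcal C$, hence is eventually periodic, and the crux is that its period is $1$. Summing the firing multisets used in the successive re-stabilizations around a period-$p$ cycle, and using $\beta\in L$ together with the fact that the sink never fires, shows that over one full period every non-sink vertex is fired exactly $p$ times in total; the hereditary analogue of Dhar's burning algorithm then forces each individual re-stabilization to fire every non-sink vertex exactly once, which collapses the cycle to a single fixed point $\nu$. This $\nu$ is critical, hence recurrent, and lies in $\mathcal C$, so every class contains a recurrent configuration.

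For uniqueness, suppose $\nu$ and $\nu'$ are equivalent recurrent configurations. Global reachability of $\nu$ gives $\nu=(\nu'+\sigma)^{\circ}$ for some chip-addition $\sigma$ (a sum of vectors $\chi_i-\chi_0$, so $\sigma\ge 0$ off the sink), and the equivalence $\nu\sim\nu'$ forces $\sigma\in L$. Since $\nu'$ is critical, $(\nu'+N\beta)^{\circ}=\nu'$ for every $N\ge 0$, whence $(\nu'+\sigma+N\beta)^{\circ}=((\nu'+N\beta)^{\circ}+\sigma)^{\circ}=\nu$ for every $N$. Feeding this into the burning analysis of the existence step---applied to the $\Phi$-orbit of $\nu$ inside $\mathcal C$, together with the symmetric statement obtained by reversing the roles of $\nu$ and $\nu'$---one shows that adding a nonnegative element of $L$ to a recurrent configuration and stabilizing returns the same configuration, and concludes $(\nu'+\sigma)^{\circ}=\nu'$, i.e. $\nu=\nu'$.

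The main obstacle is the ``period one'' statement: that iterating ``fire the sink and stabilize'' converges, inside a single equivalence class, to one configuration (a hereditary burning algorithm). The subtlety is that the stabilization map $E\mapsto E^{\circ}$ is \emph{not} order-preserving, so the monotonicity that must be exploited is that of the odometer $\mathrm{st}(E)$---the vector recording how often each vertex fires during stabilization---which \emph{is} order-preserving in $E$; squeezing convergence out of this, and in particular controlling non-sink vertices not adjacent to the sink (so not fed directly by $\beta$), is where the real work lies. The remaining ingredients---that stabilization respects equivalence classes and the identity $\Phi^{k}(E)=(E+k\beta)^{\circ}$---are routine consequences of Lemma~1, and the equivalence of recurrence and criticality is used exactly as in the classical (ASM) case.
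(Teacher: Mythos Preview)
Your plan diverges substantially from the paper's argument, and while the existence half can be made to work, the uniqueness half has a real gap.

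\textbf{Existence.} The paper does not iterate $\Phi$ at all: it simply manufactures, by firing successively larger ``distance balls'' around $v_0$, a configuration with at least $\deg(v)$ chips at every $v\neq v_0$; such a configuration is globally reachable, and its stabilization is therefore recurrent. Your route---iterate $\Phi$, land in a periodic orbit, then show the period is $1$---is viable, but you leave the crucial lemma unproved. The statement you need is: if $D$ is stable in the hereditary model $H$, then during the stabilization of $D-Q\chi_0$ every non-sink vertex fires at most once. Here is the missing argument. Take a stabilizing sequence $M_1,\dots,M_s\in H$, and suppose some vertex fires twice; let $j$ be minimal with this property and $v\in M_j$ a twice-fired vertex. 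Then the partial odometer $u^{(j-1)}$ is $\{0,1\}$-valued; let $S$ be its support and $T=S\cup\{v_0\}$, so the configuration just before step $j$ is $D-Q\chi_T$. Readiness of $M_j$ at $v\in M_j\cap S$ gives $D(v)\ge |(\{v\},T^c)|+|(\{v\},M_j^c)|$, and since $v_0\in M_j^c$ one has $M_j^c\cup T^c=M_j^c\cup S^c=(M_j\cap S)^c$; hence $D(v)\ge |(\{v\},(M_j\cap S)^c)|$. The same inequality holds for every $w\in M_j\cap S$, so the nonempty set $M_j\cap S\in H$ is ready in $D$, contradicting stability. With this in hand, your period-$1$ conclusion follows: along a period-$p$ cycle the odometers sum to $p\mathbf 1$ and are each $\le\mathbf 1$, hence each equals $\mathbf 1$, forcing $p=1$.

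\textbf{Uniqueness.} Here your outline breaks down. You write ``since $\nu'$ is critical'', but $\nu'$ is only assumed recurrent; the implication recurrent $\Rightarrow$ critical is the part of Lemma~2 whose proof in this paper \emph{uses} the $\epsilon$ constructed inside the present theorem, so invoking it is circular. More seriously, the key assertion ``adding a nonnegative element of $L$ to a recurrent configuration and stabilizing returns the same configuration'' is essentially a restatement of uniqueness: if $\sigma\in L$ with $\sigma\ge 0$ off the sink, then $(\nu'+\sigma)^\circ$ is stable, globally reachable, and equivalent to $\nu'$, i.e.\ another recurrent representative of the same class---so concluding it equals $\nu'$ \emph{is} the uniqueness you are trying to prove. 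The ``burning analysis of the existence step'' does not supply this: that analysis shows every $\Phi$-orbit terminates in a fixed point, but says nothing about two \emph{different} fixed points in the same class. By contrast, the paper's argument avoids this trap entirely: it builds a strictly positive $\epsilon=D-D^{\circ}$ with $(\nu+\epsilon)^\circ=\nu$ for every recurrent $\nu$, then for equivalent recurrents $\nu,\nu'$ writes $\nu-\nu'=Qf$, splits $f=f^++f^-$, and uses a large multiple $k\epsilon$ to route both $\nu+k\epsilon$ and $\nu'+k\epsilon$ through the common configuration $D+k\epsilon$ via legal firings, contradicting Lemma~1 unless $\nu=\nu'$. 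If you want to keep your framework, you must either reproduce this $\epsilon$-trick or give an independent argument that $\Phi$ has at most one fixed point per class; the latter does not follow from eventual periodicity alone.
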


\begin{proof}

We begin by observing that every chip-firing equivalence class contains at least one recurrent configuration.  In a stable configuration, each vertex $v$ has at most deg$(v)-1$ chips.  Therefore, if we can show that each equivalence class contains a configuration with more than deg$(v)$ chips at each vertex $v$, it would follow that this configuration is globally reachable and hence its stabilization is recurrent.  The technique which we now apply also appears in \cite{Baker}.   Partition the vertices according to their distance from $v_0$.  Let $d$ be the maximum distance of a vertex from $v_0$.  Begin by firing all of the vertices of distance at most $d-1$ from $v_0$.  This has the effect of sending money to the vertices of distance $d$.  Repeat until each such vertex $v$ has at least deg$(v)$ chips.  Now fire all of the vertices of distance at most $d-2$ from $v_0$ until the vertices of distance $d-1$ have at least their degree number of chips.  Working backwards in this way towards $v_0$, we obtain the desired configuration.

We now show that there is at most one recurrent configuration in each equivalence class.  This proof is identical to the argument presented in \cite{Holroyd} -- it is a little surprising that their argument requires no modification.  First, we would like to show that there exists a configuration $\epsilon$ with $\epsilon(v_i) > 0 $ for all $i \neq 0$, such that when we add $\epsilon$ to a recurrent configuration $\nu$ and stabilize, we obtain $\nu$.  Let $D$ be a configuration such that $D(v_i) \geq $ deg$(v_i)$ for all $i \neq 0$.  We will take $\epsilon = D-D^{\circ}$.  Because $\nu$ is recurrent, it is globally reachable, hence there exists some configuration $\zeta$ such that $(D+\zeta)^{\circ} = \nu$.  We are interested in computing $\gamma^{\circ} = (D+\zeta + \epsilon)^{\circ}$.  Because stabilization is independent of firings chosen, we can stabilize $\gamma$ by first stabilizing $D+\zeta$, i.e. $\gamma^{\circ} = ((D+\zeta)^{\circ}+\epsilon)^{\circ} =( \nu+\epsilon)^{\circ}. $ On the other hand, this is also equal to $(D^{\circ}+\zeta +\epsilon)^{\circ} = (D^{\circ}+\zeta +D-D^{\circ})^{\circ} = (\zeta +D)^{\circ} = \nu$.

Assume that there are two different equivalent recurrent configurations $\nu$ and $\nu'$ such that $\nu \sim \nu'$.  By definition, there exists some $f \in \mathbb{Z}^{n+1}$ such that $\nu-\nu' = Qf$, moreover we can take $f$ to be such that $f(v_0)=0$ because the all ones vector is in the kernel of $Q$.  Let $f^+, f^- \in \mathbb{Z}^{n+1}$ be such that $f^+\geq \vec0, f^-\leq \vec0$, and $f^++f^-=f$.  Therefore, there is some configuration $D$ such that $D= \nu-Qf^+= \nu'-Q(-f^-)$.  Note that because $\nu$ and $\nu'$ are stable, it follows that $D$ may have vertices which are in debt.  For any $k \in \mathbb{N}$, $\nu+k\epsilon$ and $\nu'+k\epsilon$ will stabilize to $\nu$ and $\nu'$ respectively, as was shown above.  On the other hand, if we take $k$ to be sufficiently large, we can perform firings defined by $f^+$ and $-f^-$ (by individual vertices for example) to $\nu+k\epsilon$ and $\nu'+k\epsilon$ respectively to obtain the configuration $D+k\epsilon$.  But now we arrive at the contradiction that $D+k\epsilon$ should stabilize to both $\nu$ and $\nu'$.  

\end{proof}

\begin{lemma}  Given a fixed hereditary chip-firing model  on a graph $G$, a chip-firing configuration $\nu$ on $G$ is recurrent if and only if it is critical.  
\end{lemma}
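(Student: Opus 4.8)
The plan is to prove the two implications separately. The forward implication (recurrent $\Rightarrow$ critical) will be a short consequence of the uniqueness part of Theorem 2, while the reverse implication (critical $\Rightarrow$ recurrent) will require a flooding argument in the spirit of the proof of Theorem 1.

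For recurrent $\Rightarrow$ critical: suppose $\nu$ is recurrent and set $\nu' := (\nu - Q\chi_0)^\circ$. Note first that $\nu - Q\chi_0$ is a legitimate configuration, since adding $-Q\chi_0$ (which is a nonnegative combination of Abel--Jacobi moves) only increases the values at non-sink vertices, so $\nu'$ is well defined by Lemma 1. I would then check that $\nu'$ is again globally reachable: given any configuration $D'$, first use global reachability of $\nu$ to reach $\nu$ from $D'$, then add the chips $-Q\chi_0$, then fire ready sets to stabilize; because adding chips preserves readiness, the composite procedure legitimately reaches $\nu'$ from $D'$. Hence $\nu'$ is stable and globally reachable, so it is recurrent, and $\nu' \sim \nu$ because $\nu - \nu'$ lies in the column span of $Q$. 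By the uniqueness in Theorem 2, $\nu' = \nu$, that is $(\nu - Q\chi_0)^\circ = \nu$; together with stability this says $\nu$ is critical.

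For critical $\Rightarrow$ recurrent: suppose $\nu$ is critical, so $\nu$ is stable and $(\nu - Q\chi_0)^\circ = \nu$. I would first upgrade this to $(\nu - kQ\chi_0)^\circ = \nu$ for every $k \geq 1$ by induction, using Lemma 1 to stabilize in stages: $(\nu - (k+1)Q\chi_0)^\circ = ((\nu - kQ\chi_0)^\circ - Q\chi_0)^\circ = (\nu - Q\chi_0)^\circ = \nu$. Next I would flood the graph outward from the sink. Partition the non-sink vertices into levels $1, \dots, d$ by distance from $v_0$. In $\nu - kQ\chi_0$ each level-$1$ vertex carries on the order of $k$ chips; fire each of them on the order of $k/N$ times (with $N$ larger than every vertex degree), which is legal since they stay well above their degrees, depositing on the order of $k/N$ chips on each level-$2$ vertex while the level-$1$ vertices remain rich. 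Repeat one level at a time, firing the level-$j$ vertices on the order of $k/N^{j}$ times to enrich level $j+1$, and never touching level $d$; a level-$j$ vertex is adjacent only to levels $j-1, j, j+1$, so it only gains chips after its own stage. For $k$ large enough all these singleton firings are ready when performed, and the configuration $E_k$ reached at the end has at least $\deg(v)$ chips at every non-sink vertex $v$. Since $E_k$ is obtained from $\nu - kQ\chi_0$ by legal firings, Lemma 1 gives $E_k^\circ = (\nu - kQ\chi_0)^\circ = \nu$. But $E_k$ has at least $\deg(v)$ chips at every non-sink vertex, so by the argument in the proof of Theorem 1 the configuration $E_k$ is globally reachable and its stabilization $\nu = E_k^\circ$ is recurrent.

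The main obstacle is the flooding step: one must choose the number of firings at each level so that every singleton fired is genuinely ready at the instant it is fired and so that all non-sink vertices are simultaneously rich in the final configuration $E_k$. Taking the firing counts to decay geometrically from level to level --- fast enough that the chips already present dominate the losses, yet slow enough that each level passes on enough to the next --- makes both requirements routine, but it is the one place where care with the inequalities is needed. A minor point worth recording explicitly is that $\nu - kQ\chi_0$ and all the intermediate configurations remain nonnegative away from $v_0$, so Lemma 1 and the framework of ``adding chips and firing ready sets'' apply throughout.
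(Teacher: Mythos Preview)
Your proof is correct and, in its overall architecture, matches the paper's: both directions hinge on the uniqueness statement of Theorem~1 (which you call Theorem~2), and the backward implication is handled identically---iterate criticality to $(\nu - kQ\chi_0)^\circ = \nu$, then for large $k$ flood chips outward from the neighbors of $v_0$ to reach a configuration with at least $\deg(v)$ chips at every non-sink vertex, whose stabilization is therefore recurrent.

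The one genuine difference is in the forward direction. The paper shows that $D := (\nu - Q\chi_0)^\circ$ is recurrent by invoking the auxiliary configuration $\epsilon$ built in the proof of Theorem~1: since $(\nu + k\epsilon)^\circ = \nu$, one has $(\nu + k\epsilon - Q\chi_0)^\circ = D$, and for $k$ large the left side is the stabilization of a configuration exceeding every degree, hence globally reachable. You bypass $\epsilon$ entirely and argue directly that $D$ is globally reachable by composing reachability of $\nu$ with the Abel--Jacobi addition $-Q\chi_0$ and a further stabilization, using monotonicity (adding nonnegative chips preserves readiness) to collapse the ``add, fire, add, fire'' sequence into the required ``add once, then fire'' form. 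Your route is slightly more self-contained since it does not rely on the existence of $\epsilon$; the paper's route reuses machinery already in hand. Either way, uniqueness from Theorem~1 closes the argument.
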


\begin{proof}

Suppose first that $\nu$ is recurrent, but not critical, that is $(\nu-Q\chi_0)^{\circ} = D \neq \nu$.  Then $(\nu+k\epsilon-Q\chi_0)^{\circ}=((\nu+k\epsilon)^{\circ}-Q\chi_0)^{\circ}=(\nu-Q\chi_0)^{\circ}=D$.  Because $\epsilon (v_i) >0$ for all $i \neq 0$, we can take $k$ sufficiently large so that $(\nu+k\epsilon-Q\chi_0)(v_i)> $ deg$(v_i)$ for all $i \neq 0$ and it follows that $D$ is recurrent, a contradiction.  Conversely, suppose that $D$ is not recurrent, but that $D$ is critical, then $(D-kQ\chi_0)^{\circ} = D$ for all $k \in \mathbb{N}$.  If we take $k$ to be sufficiently large, then we can perform firings as in the beginning of Theorem 1 to spread the chips around in the graph and reach a configuration which has at least degree number of chips at each vertex.  It follows that $D$ is globally reachable, hence recurrent, a contradiction.

\end{proof}

\begin{lemma}
The number of chip-firing equivalence classes on a graph $G$ is the same as the number of spanning trees of $G$.
\end{lemma}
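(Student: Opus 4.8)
The plan is to reduce the statement to the Matrix--Tree Theorem by identifying the set of chip-firing equivalence classes with a finite abelian group whose order is a determinant. First I would set up a coordinate system: a configuration $D$ of degree $0$ is completely determined by its values $(D(v_1),\dots,D(v_n))$ on the non-sink vertices, since $D(v_0)=-\sum_{i=1}^n D(v_i)$ is then forced. Thus $D\mapsto (D(v_1),\dots,D(v_n))$ is an additive bijection from the set of degree-$0$ configurations onto $\mathbb{Z}^n$, and I would work inside $\mathbb{Z}^n$ from here on.

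Next I would pin down the equivalence relation in these coordinates. By the discussion in Section~2, $D\sim D'$ exactly when $D-D'$ lies in the $\mathbb{Z}$-span of the columns of the Laplacian $Q$. The column $Q\chi_0$ corresponding to a firing of the sink satisfies $Q\chi_0=-\sum_{i=1}^n Q\chi_i$, because the columns of $Q$ sum to the zero vector (the all-ones vector is in the kernel of $Q$, and $Q$ is symmetric); hence it is redundant. After deleting the $v_0$-coordinate, the lattice of differences realized by firings is therefore generated by the images of $Q\chi_1,\dots,Q\chi_n$, i.e.\ by the columns of the reduced Laplacian $\widetilde{Q}$ obtained from $Q$ by deleting the row and column indexed by $v_0$. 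So the set of equivalence classes is in natural bijection with the quotient group $\mathbb{Z}^n/\widetilde{Q}\,\mathbb{Z}^n$.

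It then remains to count this quotient. Since $G$ is connected, $\widetilde{Q}$ is nonsingular, so putting $\widetilde{Q}$ into Smith normal form yields $\lvert \mathbb{Z}^n/\widetilde{Q}\,\mathbb{Z}^n\rvert=\lvert\det\widetilde{Q}\rvert$, and by the Matrix--Tree Theorem $\det\widetilde{Q}$ equals the number of spanning trees of $G$. The only ingredient from outside elementary linear algebra is the Matrix--Tree Theorem, which I would cite; the step most prone to slips is the verification that the sink-firing column is redundant modulo the remaining ones, which is precisely the observation that the rows (equivalently columns) of $Q$ sum to zero. An alternative, purely combinatorial route would be to exhibit a bijection directly, for instance via Dhar's burning algorithm pairing recurrent configurations with spanning trees, but the determinant argument is the shortest given the machinery already developed in the paper.
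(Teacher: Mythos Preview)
Your argument is correct and follows essentially the same route as the paper: identify degree-$0$ configurations with $\mathbb{Z}^n$ by dropping the sink coordinate, recognize the equivalence classes as cosets of the image of the reduced Laplacian $\bar{Q}$, and invoke the Matrix--Tree Theorem to equate $\lvert\det\bar{Q}\rvert$ with the number of spanning trees. The paper's own proof is terser, omitting the explicit check that the sink column is redundant and the Smith-normal-form justification, but the content is the same.
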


\begin{proof}
Let $\bar{Q}$ denote the matrix obtained from $Q$ by deleting the first row and column, i.e. the row and column corresponding to $v_0$.  This matrix, called the reduced Laplacian of a graph is known to have full rank as $G$ is connected, and by the matrix tree theorem $det (\bar{Q})$ is equal to the number of spanning trees of $G$ \cite{Kirchoff}.  By ignoring the values of $v_0$ in our configuration, we see that the number of different chip-firing equivalence classes is the number of cosets for the image of $\bar{Q}$ and this index is given by $det(\bar{Q})$.
\end{proof}

\end{section}

\

\begin{section}{Bijection}

This algorithm is a modification of the Cori-Le Borgne algorithm \cite{Cori} as presented in \cite{Baker2}.  Their algorithm can be viewed as a variant of Dhar's burning algorithm \cite{Dhar2}.  We will need to call Dhar's burning algorithm as a subroutine, so we first begin by describing this method, and do so in the context of the cluster firing model where the author believes it is more naturally understood.  One might argue that the brilliance of Dhar's algorithm is that its discovery occurred in the context of the Abelian sandpile model, where its application is less obvious.  

Given a recurrent configuration $\nu$ for the sandpile model $K^+ - \nu ={\bar \nu}$ is a recurrent configuration in the cluster firing model, where $K^+(v)= deg(v)-1$ for all $V(G)\setminus\{v_0\}$.  The interested reader can prove this fact for themselves using Lemma 3 or look to \cite{Baker} for an alternate proof.  This allows a bijection for one model to be ``dualized" to produce a bijection for the other model.  The bijection presented here is the first bijection which the author is aware of that applies directly to both models without exploiting this duality.

As was mentioned in the introduction, the recurrent configurations in the cluster firing model are precisely the stable configurations, therefore, to check that a configuration $\nu$ is recurrent, we need only check that there exists no set $A \subset V(G)\setminus{v_0}$ which can fire without sending a vertex into debt.  {\it A priori} we would need to check an exponential number of sets to be sure that $\nu$ was reduced, but Dhar's observation is that it's sufficient to check only $n$ such sets.  Assume that $\nu$ is stable and begin by firing $A_1=V(G)\setminus{v_0}$.  By assumption, there exists at least one vertex $v$ which is sent into debt.  Remove $v$ from $A_1$ and continue firing sets in $\nu$ and removing vertices sent into debt until reaching the empty set.   

Here is why this works:  suppose that $B \in V(G)\setminus{v_0}$ is ready in $\nu$, but that we have a collection $A_1, \dots, A_n$ of sets which were obtained from a run of Dhar's algorithm.  There exists $i$ maximum such that $B\subset A_i $.  It follows that $A_{i-1} = A_i \setminus v$, with $v \in B$, where $v$ was sent into debt by $A_i$, but if we fire $A_i \setminus B$, $v$ may only gain chips, and $v$ is supposedly able to fire in $B$ without being sent into debt.  Firing $A_i \setminus B$ and then $B$ is the same as firing $A_i$, contradicting the fact that $v$ was sent into debt by $A_i$.

Dhar's burning algorithm earns its name from the following alternate description:  Place $D(v)$ firefighters at each vertex, and start a fire at $v_0$.  The fire spreads through the graph along the edges, but is prevented from passing through vertices by the firefighters located there.  When the number of edges burned incident to a vertex is greater than the number of firefighters present, the firefighters are overpowered and the fire burns through the vertex.  A configuration is stable in the cluster firing model if and only if the fire burns through the entire graph.  Dhar noticed that by burning in a systematic way, this algorithm can be turned into a bijection between the recurrent configurations and the spanning trees.  

In the Cori-Le Borgne algorithm, the edges are burned in a different order to produce an ``activity preserving" bijection.  To describe the Cori-Le Borgne algorithm, we begin with an arbitrary ordering of the edges $e_1, e_2, \dots, e_m \in E(G)$.  The setup is the same as with Dhar, except that we burn one edge at a time, always taking the edge with the smallest label connecting the burnt vertices to the non burnt vertices.  When an edge picked/burned causes the firefighters at a vertex to be overpowered and the vertex to be burnt, we mark this edge.  It is clear that if the fire burns through the graph, these marked edges form a spanning tree .  Converesly, if we start with a tree and begin burning the edges of our graph one at a time, the edges of the tree tell us when we should burn a vertex, hence how many firefighters (chips) a vertex should have.  This shows that the algorithm produces a bijection between the recurrent configurations and spanning trees.
\

We now give a third characterization of recurrent configurations.  This definition is the the one which will be used in our bijection.

\begin{lemma}
A configuration $\nu$ is critical if and only if any maximal sequence of firings by active vertices brings $\nu-Q\chi_0$ back to $\nu$.
\end{lemma}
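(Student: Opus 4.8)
The plan is to recognize that, once the definitions are unwound, the right‑hand condition is essentially the defining property of a critical configuration, with the ``stable'' clause coming for free, and that the only substantive input needed is Lemma 1.

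First I would make precise what a maximal sequence of firings by active vertices is: a sequence $D_0, D_1, \dots, D_r$ with $D_0 = \nu - Q\chi_0$, in which each $D_{i+1}$ is obtained from $D_i$ by firing a ready set $M_i$ (so that every vertex of $M_i$ is active in $D_i$), and which cannot be prolonged. I would then check the elementary fact that such a sequence cannot be prolonged exactly when $D_r$ is stable: if $D_r$ were not stable, some $M \in H$ would be ready in $D_r$, every vertex of $M$ would be active in $D_r$, and firing a (maximal) ready set containing such a vertex would extend the sequence; conversely, if $D_r$ is stable then no set of $H$ is ready, hence no vertex is active, so the sequence is already maximal. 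Thus the maximal sequences of firings by active vertices starting from $\nu - Q\chi_0$ are precisely the stabilizations of $\nu - Q\chi_0$.

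Next I would invoke Lemma 1: since every step fires a set from $H$, each such sequence is a stabilization of $\nu - Q\chi_0$, and by Lemma 1 its terminal configuration $D_r$ does not depend on the choices and equals $(\nu - Q\chi_0)^{\circ}$. Hence ``every maximal sequence of firings by active vertices brings $\nu - Q\chi_0$ back to $\nu$'' is equivalent to the single equation $(\nu - Q\chi_0)^{\circ} = \nu$.

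Finally I would close the loop with the definition of critical. If $(\nu - Q\chi_0)^{\circ} = \nu$, then $\nu$ is the output of a stabilization, hence stable, so $\nu$ is critical; conversely a critical $\nu$ satisfies $(\nu - Q\chi_0)^{\circ} = \nu$ by definition. This yields the claimed equivalence. I do not anticipate a genuine obstacle: the one point that is not purely formal is the identification of ``maximal sequence of firings by active vertices'' with ``stabilization of $\nu - Q\chi_0$'', and that rests entirely on Lemma 1 together with the definitions of \emph{ready}, \emph{active}, and \emph{stable}.
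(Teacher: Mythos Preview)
Your reading of ``firings by active vertices'' is not the one the paper intends. The very first sentence of the paper's proof is a clarification of the statement: ``Here, we are allowing active vertices to fire even though this may cause them to go into debt.'' That is, at each step one selects a single vertex $v$ that is active in the current configuration (i.e.\ $v$ lies in some ready set $M$) and fires the vertex $v$ \emph{alone}; the singleton $\{v\}$ need not be ready, and $v$ may go negative. These one-vertex moves are precisely what the algorithm $\sigma$ performs, passing from $D-Q\chi_X$ to $D-Q\chi_{X\cup\{v\}}$, and the lemma is meant to justify that procedure. Under your reading---fire a ready $M_i\in H$ at each step---no vertex ever goes into debt, the process is literally a stabilization, and the lemma collapses to the definition of ``critical''; there would be no reason to isolate it as a separate lemma.

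Under the intended reading your argument breaks at the key step. Lemma~1 concerns sequences of firings of \emph{ready} sets in $H$; it says nothing about sequences that fire single active vertices and pass through configurations with vertices in debt. Hence you cannot conclude that every maximal sequence of single-active-vertex firings from $\nu-Q\chi_0$ terminates at $(\nu-Q\chi_0)^{\circ}$, and the identification ``maximal active-vertex sequence $=$ stabilization'' is exactly the nontrivial content you have assumed away. The paper argues differently: in one direction, if $\nu$ is critical then one can keep firing active vertices until every non-sink vertex has fired once, returning to $\nu$; in the other, if some maximal active-vertex sequence returns to $\nu$ then each non-sink vertex fires exactly once in it, and by comparing this sequence to the genuine stabilization of $\nu-Q\chi_0$ one argues that the first vertex in the sequence missing from the stabilization could only have become active because of firings that the stabilization also performs, a contradiction forcing $(\nu-Q\chi_0)^{\circ}=\nu$.
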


\begin{proof}
Here, we are allowing active vertices to fire even though this may cause them to go into debt.  If a configuration $\nu$ is critical, it is clear that we can continue firing active vertices in the ready sets and eventually return to $\nu$.  Conversely, suppose that there exists some firing of individual active vertices which brings $\nu-Q\chi_0$ back to $\nu$, but that $\nu$ is not critical.  If this is the case, there must be some vertex $v \in V(G) \setminus \{v_0\}$ which was never fired in the stabilization of $\nu-Q\chi_0$.  We might take $v$ to be the first such vertex, but observe that this situation may only occur if a vertex of the same type has already been fired causing $v$ to become active, a contradiction.
\end{proof}
\

We let $\mathcal N$ denote the set of recurrent configurations from a hereditary chip-firing model on a graph $G$, and $\mathcal{T}$ denote the set of spanning trees of $G$.  Recall $m(v,D)$ is minimum amount that $v$ can lose by firing a ready set in $D$ which contains $v$.  We now explain our bijection between recurrent configurations in a fixed hereditary chip-firing model $H$ on a graph $G$ and the spanning trees of $G$. Here is the algorithm $\sigma$ for taking a recurrent configuration $D$ and producing a spanning tree $T = \sigma(D)$:

\

\

$\sigma : \mathcal{N} \rightarrow \mathcal{T}$

\begin{itemize}

\item Begin with $X= v_0$, $R= \emptyset$, $T=\emptyset$. 

\item Take $e_i \in (X,X^c)$ with $i$ minimum such that $e_i \notin R$.

\noindent Let $e_i = (u,v)$ with $u \in X$ and $v \in X^c$.  

\item If $D(v) < m(v,D-Q\chi_{X})- |\{e \in R : e=(w,v)\} \cup \{e_i\}|$, set $R:= R\cup \{e_i\}$.  

\item If $D(v) = m(v,D-Q\chi_{X})- |\{e \in R : e=(w,v)\} \cup \{e_i\}|$, we set $T:= T\cup\{e_i\}$, $R:=\emptyset$, and $X:= X \cup \{v\}$.

\end{itemize}

\

We now describe our algorithm $\gamma$ for taking a tree $T$ and producing a recurrent configuration, $\gamma(T)$.  This process has two parts.  First we use $T$ to construct a total order on the vertices:  

\

\

$\gamma : \mathcal{T} \rightarrow \mathcal{N}$ 

 Part 1:

\begin{itemize}

\item Begin with  $X_0= \{v_0\}=\{w_0\}$, and $R= \emptyset$.  

\item Take $e_i \in (X,X^c)$ with $e_i \notin R$ and $i$ minimum.  

\item If $e_i \notin T$, set $R:= R \cup \{e_i\}$

\item If  $e_i \in T$ with $e_i = (u,v)$ and $v \in X^c$, set $v= w_i$, $X_i = X_{i-1}\cup\{w_i\}$, and $R:=\emptyset$.

\end{itemize}

\

This process terminates when we have observed every edge of $T$, at which point we have acquired a total order $w_0 < w_1 < \dots < w_n$ on the vertices.  

\

The idea is to now run the algorithm``backwards" using the total order to reconstruct $D$:

\

$\gamma : \mathcal{T} \rightarrow \mathcal{N}$ 

Part2:

\begin{itemize}

\item Begin with $Y_{0}=\{w_0, \dots, w_{n-1}\}$ and $R= \emptyset$.

\item Assume that at the $i$th iteration of step 1, the values of $D(w_j)$ for $j\geq n-i+1$ have already been calculated.  

\item  {\bf Step 1}:  Fire $Y_{i} = \{w_0, \dots, w_{n-1-i}\}$, i.e. compute $D-Q\chi_{ Y_i}$.

\item  Find the maximal ready sets in $V(G) \setminus Y_i$ for $D-Q\chi_{ Y_i}$ by applying Dhar's burning algorithm to $A_j \cap (V(G) \setminus Y_i)$ for each maximal element $A_j \in H$.

\item Compute $m(w_{n-i}, D- Q\chi_{Y_{i}})$.

\item {\bf Step 2}: Pick $e_k $ with $k$ minimum so that $e_k \notin R$.  Let $e_k = (u, v)\in (Y_i, Y_i^c)$.

\item  If $e_k \notin T$, set $R:= R \cup \{e_k\}$, and go to step 2.

\item If $e_k \in T$, we know that $e_k = (u, w_{n-i})$. 

\noindent Set $D(w_{n-i})= m(w_{n-i}, D- Q\chi_{Y_{i}})-|\{ e \in R: e = (v, w_{n-i})\}\cup \{e_k\}|$, $R:=\emptyset$, and $Y_i := Y_{i+1} $, then go to step 1.

\end{itemize}

\

\begin{theorem}
The operations, $\sigma$ and $\gamma$ are inverse to each other and induce a bijection between the recurrent configurations of a hereditary chip-firing model $H$ on a graph $G$ and the spanning trees of $G$.
\end{theorem}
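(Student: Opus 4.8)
The plan is to show that $\gamma \circ \sigma = \mathrm{id}_{\mathcal N}$ and $\sigma \circ \gamma = \mathrm{id}_{\mathcal T}$; since $\mathcal N$ and $\mathcal T$ are finite and equinumerous by Lemma 4 together with Theorem 1, either composition being the identity already forces both maps to be bijections, but it is cleanest to verify the two compositions directly. The first task is to check that $\sigma$ and $\gamma$ are well-defined: that $\sigma(D)$ is actually a spanning tree when $D$ is recurrent, and that $\gamma(T)$ is actually a recurrent configuration when $T$ is a spanning tree. For $\sigma$, the key point is that the burnt-vertex set $X$ strictly grows each time an edge is added to $T$, no vertex is ever added twice, and the process exhausts $V(G)$ precisely because $D$ is critical: by Lemma 5, running the firing process driven by active vertices starting from $D - Q\chi_0$ returns to $D$, so every vertex eventually becomes active and hence gets absorbed into $X$; the edges placed in $T$ connect each newly burnt vertex to $X$, so $T$ is connected, acyclic, and spans. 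For $\gamma$, Part 1 is essentially the same growth argument run with the tree prescribing when to absorb a vertex (so it terminates with a genuine total order), and Part 2 reconstructs the chip values; one must argue the resulting configuration is stable and critical, again via Lemma 5.

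Next I would set up the correspondence between a single run of $\sigma$ on $D = \gamma(T)$ and the data produced by $\gamma$. The heart of the matter is an invariant maintained through both algorithms: after the burnt set has grown to $X_{i} = \{w_0,\dots,w_i\}$, the configuration seen by $\sigma$ at the sink-like contracted graph, namely $D - Q\chi_{X_i}$, agrees with the configuration $\gamma$'s Part 2 has on $V(G)\setminus Y_{n-1-i}$ — i.e. the ``reverse'' firing order in Part 2 of $\gamma$ reproduces exactly the firing order $\sigma$ performs. Concretely, Part 2 of $\gamma$ fires the sets $Y_{n-1}, Y_{n-2}, \dots, Y_0$, and these are precisely the complements (within $V(G)\setminus\{v_0\}$, relative to $v_0$) of the sets $X_1, X_2, \dots$ that $\sigma$ builds; firing $Y_i$ in $\gamma$ is the same Laplacian operation as $\sigma$'s act of recording that all of $X_{n-1-i}$ has been burnt. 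I would prove by downward induction on $i$ that the quantity $m(w_{n-i}, D - Q\chi_{Y_i})$ computed in $\gamma$ equals $m(v, D - Q\chi_X)$ at the corresponding moment in $\sigma$ (this uses that Dhar's burning algorithm applied to $A_j \cap (V(G)\setminus Y_i)$ correctly identifies maximal ready sets, which follows from the discussion of Dhar's algorithm in this section), and that the set $R$ of ``rejected'' edges accumulated between consecutive tree-edge absorptions is identical in both runs because both choose the minimum-label edge in the same cut $(X, X^c)$ and make the same accept/reject decision. The accept/reject thresholds are literally the same inequality: $\sigma$ rejects $e_i$ when $D(v) < m(v, D-Q\chi_X) - |\{e\in R : e = (w,v)\}\cup\{e_i\}|$, and $\gamma$'s Part 2 defines $D(w_{n-i})$ by the complementary equality, so once the $m$-values and the running $R$'s match, the edge chosen to lie in $T$ and the chip value assigned coincide; this gives $\gamma(\sigma(D)) = D$ on the chip values, and $\sigma(\gamma(T)) = T$ on the edge sets, simultaneously.

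For the reverse composition $\sigma \circ \gamma$, I would observe that Part 1 of $\gamma$ is exactly ``run $\sigma$'s cut-scanning loop but consult $T$ instead of $D$ to decide when to absorb a vertex,'' so the total order $w_0 < \dots < w_n$ it produces is forced to be the order in which $\sigma(\gamma(T))$ would absorb vertices, provided $\gamma(T)$ is indeed a configuration whose $\sigma$-run absorbs vertices in that order — and that provision is precisely what Part 2 is engineered to guarantee by the invariant above. So the two directions are not independent: the single invariant (matching firing sequences, matching $m$-values, matching $R$'s, matching threshold inequalities) yields both $\gamma\sigma = \mathrm{id}$ and $\sigma\gamma = \mathrm{id}$ at once. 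The main obstacle I anticipate is the bookkeeping around $m(v, D-Q\chi_X)$: because maximal ready sets need not be unique and distinct maximal ready sets containing $v$ can cost $v$ different numbers of chips, one must check carefully that the minimum is stable under the partial firings the algorithms perform — i.e. that firing $Y_i$ (a subset of vertices not containing $w_{n-i}$) does not change which maximal ready set minimizes $v$'s loss in a way that breaks the induction. Establishing that $m$ behaves predictably under these firings — essentially a monotonicity/consistency lemma of the same flavor as the ``$M$ ready, $N$ fires first $\Rightarrow$ $M\setminus N$ ready'' observation from the proof of Lemma 1 — is the technical crux; once it is in hand the rest is a synchronized induction.
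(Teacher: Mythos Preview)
Your approach is essentially the paper's, but considerably more thorough. The paper's actual proof is very short: it argues only that $\gamma\circ\sigma=\mathrm{id}_{\mathcal N}$, and it does so merely by observing that the vertex order produced in Part~1 of $\gamma$ on $T=\sigma(D)$ coincides with the order in which $\sigma$ processed the vertices of $D$, and that Part~2 of $\gamma$ is ``designed'' to reconstruct $D$ from that order. It then invokes the cardinality argument (your Lemma~4/Theorem~1 remark) to conclude $\sigma$ is a bijection with inverse $\gamma$, without ever verifying $\sigma\circ\gamma=\mathrm{id}_{\mathcal T}$ directly, and without checking well-definedness of either map or the stability of $m(v,D-Q\chi_X)$ under partial firings.

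So the shortcut you mention and then set aside---prove one composition is the identity and use equinumerosity---is exactly what the paper does. Your plan to carry out the synchronized induction in full, to verify well-definedness via Lemma~5, and to isolate the $m$-stability issue as the technical crux, goes well beyond the paper's argument; these are genuine concerns the paper leaves implicit. What you gain is rigor and an honest identification of where the work lies; what the paper's version buys is brevity, at the cost of leaving the reader to supply precisely the bookkeeping you outline.
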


\begin{proof}
First we claim that $\gamma \circ \sigma$ is the identity map on the recurrent configurations.  Let $D$ be recurrent and $\sigma(D) =T$ a spanning tree.  Observe that the total order produced on the vertices of $G$ during the run of $\gamma$ on $T$ is the same as the order in which the vertices are processed during $\sigma$ run on $D$.  Given this total order on the vertices, the algorithm $\gamma$ is designed so as to produce the configuration $D$ such that $\sigma(D)=T$.  It follows that $\sigma$ is injective, and by Lemma 3, $\sigma$ is an injective map between two sets with the same cardinality.  It follows that $\sigma$ is a bijection with explicit inverse $\gamma$.
\end{proof}

\

\end{section}

\bibliographystyle{model1a-num-names}
\bibliography{<your-bib-database>}



\

 {\bf School of Mathematics, Georgia Institute of Technology, Atlanta, Georgia 30332-0160, USA}

 email:  sbackman@math.gatech.edu

\end{document}